\theoremstyle{plain}
\newtheorem{theorem}{Theorem}[section]
\newtheorem*{theorem*}{Theorem}
\newtheorem{lemma}[theorem]{Lemma}
\newtheorem*{lemma*}{Lemma}
\newtheorem*{korollar*}{Korollar}
\newtheorem*{proposition*}{Proposition}
\newtheorem*{satz*}{Satz}
\newtheorem*{corolarry*}{Corollary}
\newtheoremstyle{named}{\topsep}{\topsep}{\itshape}{0pt}{\bfseries}{}{5pt plus 1pt minus 1pt}{\thmname{#1}\thmnumber{ #2.}\thmnote{\;(#3)}}
\theoremstyle{named}
\newtheorem*{namedtheorem*}{Theorem}
\newtheorem*{namedlemma*}{Lemma}
\newtheorem*{namedkorollar*}{Korollar}
\newtheorem*{namedproposition*}{Proposition}
\newtheorem*{namedcorolarry*}{Corollary}
\newtheorem{propositionA}{Proposition}
\theoremstyle{definition}
\newtheorem*{definition*}{Definition}
\newtheorem*{beispiel*}{Beispiel}
\newtheorem*{bemerkung*}{Bemerkung}
\newtheorem*{erinnerung*}{Erinnerung}
\newtheorem*{remark*}{Remark}
\newtheorem*{assumption*}{Assumption}
\newtheorem*{notation*}{Notation}
\renewenvironment{proof}[1][]{{\noindent\textbf{\proofname{#1}}}}{\qed\\}
\renewcommand{\L}{\mathcal{L}}
\newcommand{\R}{\mathbb{R}} 	% Reele Zahlen
\newcommand{\N}{\mathbb{N}} 	% Natürliche Zahlen
\newcommand{\norm}[1]{\left\lVert#1\right\rVert}
\newcommand{\grad}{\nabla}
\renewcommand{\div}{\grad\cdot}
\newcommand{\mel}{\MoveEqLeft}
\newcommand{\termzeta}{\frac{1+\left|\nabla'\zeta\right|^2}{\left(\partial_n\zeta\right)^2}}
\newcommand{\termw}{\frac{1+\left|\nabla'w\right|^2}{\left(1+\partial_nw\right)^2}}
\newcommand{\rest}[1]{R_{#1}\left(\nabla w\right)}
\newcommand{\eps}{\varepsilon}
\begin{document}

\title{Stability of traveling waves for doubly nonlinear equations}

\author{Christian Seis\footnote{Institut f\"ur Analysis und Numerik,  Universit\"at M\"unster. Email: seis@uni-muenster.de} \and Dominik Winkler\footnote{Institut f\"ur Analysis und Numerik,  Universit\"at M\"unster. Email: dominik.winkler@uni-muenster.de}}
			
%	\affil{}
%	\date{\today}

	\maketitle
     
     \begin{abstract}
          In this note, we investigate a doubly nonlinear diffusion equation in the slow diffusion regime. We prove stability of the pressure of solutions that are close to traveling wave solutions in a homogeneous Lipschitz sense. 
          We derive regularity estimates for arbitrary derivatives of the solution's pressure by extending existing results for the  porous medium equation \cite{Kienzler16}.
     \end{abstract}
\section{Introduction and Main Theorem}

We consider the doubly nonlinear degenerate parabolic equation
\begin{align}\label{1}
    \partial_\tau \rho - \div \left(\rho^{m-1} \left|\nabla \rho\right|^{p-2}\nabla \rho \right) = 0 \quad \text{ in } \left[0,\infty\right) \times \R^n,
\end{align}
with nonlinearity exponents
\begin{align}\label{2}
    p>1 \quad \text{ and } \quad m+p > 3. 
\end{align}
Equations of this type are used to describe the turbulent filtration of a fluid through a porous medium  \cite{Leibenson1945,Leibenson45}, or the filtration of a non-Newtonian fluid \cite{Kalashnikov1987,WuZhaoYinLi2001}.
The variable $\rho$ measures the fraction of the volume that is filled by the fluid. 
Thereby the parameter $p$ prescribes the turbulence of the flow arising from a nonlinear (for $p\neq2$) version of Darcy's law. The quantity $\frac{m+p-3}{p-1}$ dictates the relation between the pressure and the density of a polytropic fluid via the relation
\[
\text{pressure}\sim \text{density}^{\frac{m+p-3}{p-1}}.
\]
 For further information about the physical and modeling background we refer to \cite{Antontsev2002,Kalashnikov1987,WuZhaoYinLi2001}. 

Equation \eqref{1} is referred to as the doubly nonlinear diffusion equation as it merges the nonlinear effects of the porous medium equation (case $p=2$) and the parabolic $p$-Laplace equation (case $m=1$). Indeed, \eqref{1} can be rewritten (after a linear rescaling of time $\tau \mapsto q^{1-p}\tau$) as 
\begin{align}%\label{test}
    \partial_\tau \rho -\Delta_p\left(\rho^q\right) = 0 \tag{\ref{1}$^\prime$},
\end{align}
where $\Delta_p$ denotes the $p$-Laplacian and $q \coloneqq \frac{m-1}{p-1}+1$ satisfies $q>\frac{1}{p-1}$ according to \eqref{2}.

The condition \eqref{2} determines the so-called slow diffusion regime: The diffusion flux $\rho^{m-1}\left|\nabla \rho \right|^{p-2}$ degenerates if $\rho$ vanishes and thus, a compactly supported solution propagates with finite speed and remains compactly supported for all later times, see for example \cite{Kalashnikov1987}. 
Due to the finite propagation speed, the problem features a free moving boundary $\partial \mathcal{P}(\rho(t))$, where $\mathcal{P}(\rho(t)) = \overline{\left\{y\in \R^n : \rho(y,t)> 0\right\}}$ is the support of the solution. The asymptotic flatness of this free boundary, its regularity and the regularity of solutions near the boundary are our main concerns  in this paper.

Existence results for (weak) solutions of \eqref{1} can be found in \cite{Bernis1988,Lions1969,SturmStefan2017}. H\"older continuity of solutions and some Harnack type inequalities were proved in \cite{Ivanov1997,PorzioVespri1993} and \cite{KinnunenKuusi2007,FornaroSosio2008,Vespri1994}. See also \cite{Kalashnikov1987} for a survey of classical results regarding (even more general) degenerate parabolic equations.

In this work, we examine nonnegative solutions of \eqref{1} that are close to a traveling wave solution in order to get insights in the qualitative behavior of general solutions. As we are particularly interested in the behavior of the free boundary, considering traveling wave solutions appears to be natural. Indeed, traveling wave solutions exhibit the same boundary behavior as the so-called Barenblatt solutions, which are the radially symmetric self-similar solutions for \eqref{1} constructed first in \cite{Barenblatt1952}. These self-similar solutions do, similar to other parabolic problems, prescribe the large-time asymptotic behavior of any solution with finite mass, see \cite{Agueh03,AguehBlanchetCarillo10}.
Moreover, it can be shown that any compactly supported solution of \eqref{1} has the same spreading behavior as the Barenblatt solutions \cite{TedeevVespri2015}.
Accordingly, localizing around an arbitrary boundary point of any solution would result to leading order  in the problem on the half space we are considering here.

Traveling waves display the typical behavior of solutions also in the presence of reactions terms, as studied, for instance, in \cite{GildingKersner04,AudritoVazquez17,
Garriz20,Garriz23,DuGarrizQuiros25}. The profiles of these waves typically decay to zero at infinity, and consequently, these solutions lack a free boundary, which simplifies their analysis.

Even though the underlying techniques used in this paper would also apply to solutions around the Barenblatt solution, see for example \cite{Seis15,SeisTFE}, we will consider solutions that are close to traveling wave solutions, cf.\  \cite{John15,Kienzler16} as it turns out to be mathematically more convenient.

A traveling wave solution $\rho(t,y)$ is given by a profile that moves with at a constant speed $V\in \R$ in a certain direction $e\in \mathbb{S}^{n-1}$, that is
\begin{align}
    \rho(\tau,y) = P(y+\tau V e) .
\end{align}
Without loss of generality, we assume that the solution moves in the $n$th direction, that is $e=e_n$. In this case a traveling wave solution of \eqref{1} can be defined by the profile
\begin{align}
    P(y) \coloneqq \left(y_n\right)_{+}^{\frac{p-1}{m+p-3}} \quad \text{ with velocity }\quad V= \left(\frac{p-1}{m+p-3}\right)^{p-1},
\end{align}
where $\left(\cdot\right)_{+}$ denotes the positive part,
or equivalently
\begin{align}
    \rho_{tw}(\tau,y) = \left(y_n+\tau\left(\frac{p-1}{m+p-3}\right)^{p-1}\right)_{+}^{\frac{p-1}{m+p-3}}.
\end{align}
Note, that the profile $P$ is continuous due to the slow diffusion condition \eqref{2}.

The main result of this work is a regularity estimate for arbitrary derivatives of the \emph{pressure} $\rho^{\frac{m+p-3}{p-1}}$ of a solution $\rho$ of equation \eqref{1}. For the result to hold, we demand the initial pressure to be close (in a homogeneous Lipschitz sense) to the traveling wave pressure. As a consequence we deduce the analyticity of any level set and thus of the free boundary. 
The approach used in this paper is borrowed from \cite{Kienzler16}, where Kienzler deduced identical results for the porous medium equation. We extend the results to the larger class of doubly nonlinear diffusion equations.

\begin{theorem}\label{3}
    There exists an $\varepsilon>0$ such that for any nonnegative $\rho_0$ with $\rho_0^{\frac{m+p-3}{p-1}}\in \dot{C}^{0,1}\left(\R^n\right)$ satisfying
    \begin{align}\label{bedingunganfangsdaten}
        \|\nabla \rho_0^{\frac{m+p-3}{p-1}} - e_n\|_{L^\infty\left(\mathcal{P}(\rho_0)\right)} \leq \varepsilon,
    \end{align}
    there exists a solution $\rho$ of the doubly nonlinear diffusion equation \eqref{1} with initial datum $\rho_0$ satisfying
    \begin{align}
        \rho^{\frac{m+p-3}{p-1}}\in C^\infty\left(\mathcal{P}\left(\rho\right)
        \right) \quad \text{and} \quad \|\nabla \rho(t)^{\frac{m+p-3}{p-1}}-e_n\|_{L^\infty(\mathcal{P}(\rho(t)))}\leq c\varepsilon 
    \end{align}
    for a constant $c>0$ and additionally
    \begin{align}\label{smoothnessu}
        \sup \limits_{(\tau,y)\in \mathcal{P}(u)} \tau^{k+|\beta|}\left| \partial_\tau^k\partial_y^\beta \left(\nabla \rho^{\frac{m+p-3}{p-1}}(\tau,y) - e_n\right)\right| \leq \tilde{c} \|\nabla \rho_0^{\frac{m+p-3}{p-1}}-e_n\|_{L^\infty\left(\mathcal{P}\left(
            \rho_0\right)\right)},
    \end{align}
    for any  $k \in \N_0$, $\beta\in \N_0^n$ and a constant $\tilde{c} = \tilde{c}(k,\beta)>0$. Moreover, all level sets of $\rho$ are analytic.
    
\end{theorem}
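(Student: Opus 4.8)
The plan is to transplant Kienzler's analysis of the porous medium equation \cite{Kienzler16} to the doubly nonlinear setting; the genuinely new ingredient is the degenerate linear theory for the $p$-deformed model operator.

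\emph{Step 1: reformulation on a fixed domain.} Put $\alpha:=\frac{p-1}{m+p-3}>0$, so that the pressure $v:=\rho^{1/\alpha}=\rho^{\frac{m+p-3}{p-1}}$ solves the quasilinear equation $\partial_\tau v=\alpha^{p-1}\absolutevalue{\grad v}^{p}+\alpha^{p-2}\,v\,\laplace_p v$, with $\laplace_p$ the $p$-Laplacian. Passing to the comoving frame $z=y+\tau Ve_n$ with $V=\alpha^{p-1}$ makes the traveling wave the \emph{stationary} profile $v_\infty(z)=(z_n)_+$, at the price of an extra transport term $-V\partial_n v$. Since \eqref{bedingunganfangsdaten} forces $\partial_n v_0>0$, the map $(z',z_n)\mapsto(z',v)$ is initially a bi-Lipschitz homeomorphism of $\mathcal P(\rho_0)$ onto the half-space $\{s\ge 0\}$; inverting it (a von Mises / hodograph transform) one works with the level-set height $z_n=\zeta(z',s,\tau)$, which is governed by a degenerate parabolic equation on the \emph{fixed} half-space $\R^{n-1}\times[0,\infty)$, with $v_\infty$ corresponding to $\zeta\equiv s$. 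Writing $\zeta=s+w$ and linearizing, the transport and pressure terms cancel because $V=\alpha^{p-1}$, leaving $\partial_\tau w-\L w=\mathcal N[w]$; here $\mathcal N$ collects the quadratic-and-higher remainders, and — up to the $w$-dependent lower-order and coefficient perturbations produced by the hodograph change — $\L$ is the operator obtained by linearizing the pressure equation itself around $v_\infty$, namely $\L u=\alpha^{p-2}(p-1)\big(\alpha\,\partial_n u+z_n\,\partial_n^2 u\big)+\alpha^{p-2}z_n\laplace' u$ with $\laplace'=\sum_{i<n}\partial_i^2$. A crucial observation is that all $p$-dependent coefficients are \emph{smooth} functions of $\grad\zeta$ near $\grad\zeta=e_n$, because $\xi\mapsto\absolutevalue{\xi}^{p-2}$ is analytic away from the origin; the $p$-Laplacian thus introduces no loss of smoothness in the regime considered.

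\emph{Step 2: the degenerate linear theory.} The heart of the proof is a maximal-regularity and Schauder theory for $\partial_\tau-\L$ in the intrinsic parabolic Hölder spaces adapted to the degeneracy, i.e.\ those built on the Daskalopoulos--Hamilton / Koch metric with $d\big((z',s),(\tilde z',\tilde s)\big)^2\simeq\absolutevalue{\sqrt s-\sqrt{\tilde s}}^2+\frac{\absolutevalue{z'-\tilde z'}^2}{s\vee\tilde s}$ and time-weight $\frac{\absolutevalue{\tau-\tilde\tau}}{s\vee\tilde s}$, so that each second derivative carries a weight $s$. For $p=2$ one has $\L u=z_n\laplace u+\alpha\,\partial_n u$, which is exactly the operator handled in \cite{Kienzler16} with the parameter $\frac1{m-1}$ replaced by $\alpha$. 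For $p\neq2$ the only structural novelty is the constant anisotropy factor $p-1$ between the normal and the tangential second-order parts; the linear rescaling $z_n=\sqrt{p-1}\,\xi$ removes it, bringing $\L$ into the $p=2$ form $\alpha^{p-2}\sqrt{p-1}\,\big(\xi\laplace_\xi+\alpha\,\partial_\xi\big)$ up to a time rescaling. Hence the model estimates follow from those in \cite{Kienzler16} — ultimately from explicit kernel/Fourier estimates for the one-dimensional Bessel operator $\xi\partial_\xi^2+\alpha\,\partial_\xi$ — and the full operator, with its small hodograph-induced variable coefficients, is then treated by localization and perturbation. Checking that the localization/perturbation machinery of \cite{Kienzler16} survives this $p$-deformation is the step I expect to be the main obstacle.

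\emph{Step 3: nonlinear solution, smoothing, analyticity.} With the linear solution operators $\mathcal S$ (initial value problem) and $\mathcal T$ (inhomogeneous problem, zero data) in hand, recast the equation as the fixed-point problem $w=\mathcal S w_0+\mathcal T\mathcal N[w]$ in a small ball of the intrinsic space. The nonlinearity $\mathcal N$ is at least quadratic in $(w,\grad w,\grad^2 w)$ and, term by term, costs no more weight than $\L$ recovers, so for $\varepsilon$ small a contraction-mapping argument produces a solution, unique within this class, with $\norm{\grad w}_{L^\infty}\lesssim\varepsilon$; transferring back gives the existence of $\rho$ and the propagated bound $\norm{\grad\rho(t)^{\frac{m+p-3}{p-1}}-e_n}_{L^\infty(\mathcal P(\rho(t)))}\le c\varepsilon$. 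For \eqref{smoothnessu} one differentiates the equation: tangential derivatives $\partial_{z'}$, the good normal derivative $s\partial_s$, and $\partial_\tau$ — which, since the equation is invariant under $v(\tau,z)\mapsto\lambda^{-1}v(\lambda\tau,\lambda z)$, counts on the same footing as a spatial derivative, matching the weights $\tau^{k+\absolutevalue{\beta}}$ — commute with $\L$ modulo lower-order terms; an induction on the number of derivatives, combined with parabolic rescaling and the interior estimates, then yields $\zeta\in C^\infty$ in the intrinsic sense and the scale-invariant bounds, and undoing the hodograph transform gives $\rho^{\frac{m+p-3}{p-1}}\in C^\infty(\mathcal P(\rho))$ and \eqref{smoothnessu}. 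Finally, every level set $\{v=c\}$ with $c>0$ lies in the open positivity set, where \eqref{1} is uniformly parabolic with real-analytic structure, hence is a real-analytic hypersurface by classical interior regularity; and the free boundary $\{v=0\}$, parametrized by $z_n=\zeta(z',0,\tau)$, is real-analytic because the intrinsic Schauder estimates bootstrap to analytic-type bounds $\absolutevalue{\partial_{(z',\tau)}^{\beta}\zeta}\le C^{\absolutevalue{\beta}}\beta!$ via a Cauchy-estimate iteration, exactly as in \cite{Kienzler16}.
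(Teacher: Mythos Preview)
Your approach is essentially the same as the paper's: pressure variable, comoving frame, hodograph transform to the fixed half-space, linearization, reduction to Kienzler's linear theory, and a contraction-mapping construction followed by an analyticity argument. A few points of comparison are worth flagging.

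First, you misjudge where the actual work lies. You say ``the genuinely new ingredient is the degenerate linear theory for the $p$-deformed model operator'' and that checking Kienzler's machinery survives the $p$-deformation is ``the main obstacle''. In fact, the paper observes that after rescaling the \emph{tangential} variables $x'\mapsto(p-1)^{1/2}x'$ (you instead rescale the normal variable, which also works) the linear operator becomes \emph{exactly} $\L_\sigma w=-z_n^{-\sigma}\div(z_n^{\sigma+1}\grad w)$ with $\sigma=\frac{2-m}{m+p-3}$, and Kienzler's maximal regularity result is stated for \emph{general} $\sigma>-1$. So there is no new linear theory at all; Proposition~A is simply quoted. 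The paper's actual effort is in the nonlinearity (Lemma~\ref{LemmaNichtlinearitaet}): verifying that $\mathcal N[w]$, which for $p\neq2$ is considerably more complicated than in the porous medium case, still obeys $|\mathcal N[w]|\lesssim|\grad w|^2+z_n|\grad w||\grad^2 w|$ and the analogous bound for $\grad\mathcal N[w]$.

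Second, the paper works not in intrinsic H\"older spaces but in the Carleson-measure $L^q$ spaces $X(q),Y(q)$ introduced by Kienzler; this is what his Proposition~A is actually stated for. Third, the analyticity in time and tangential directions is obtained not by a Cauchy-estimate iteration but via Angenent's trick and the analytic implicit function theorem (Theorem~\ref{metatheorem}); transversal smoothness is then recovered from a Morrey-type estimate controlling $\|t\,\partial_n\grad w\|_{L^\infty}$ by $\|w\|_{X(q)}+\|t\,\partial_t w\|_{X(q)}$. Finally, the paper deduces analyticity of \emph{all} level sets uniformly from the analyticity of $\zeta$ in $(t,x')$, rather than treating interior level sets separately via uniform parabolicity.
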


Note that $e_n=\nabla\rho_{tw}^{\frac{m+p-3}{p-1}}$, so condition \eqref{bedingunganfangsdaten} for the initial datum $\rho_0$ ensures the closeness to the traveling wave solution.
Therefore, Theorem \ref{3} establishes the stability of the pressure of solutions to the doubly nonlinear diffusion equation \eqref{1} that are close to a flat traveling wave front.

In the special cases of the porous medium equation (see \cite{Kienzler16} and the references therein) and the $p$-Laplace equation (see \cite{DiBenedetto1993}), locally integrable solutions $\rho \in L^1_{loc}(\R^n)$ of \eqref{1} are uniquely determined by their initial datum $\rho_0\in L^1_{loc}(\R^n)$. 
To the authors' knowledge comparable uniqueness results for the general doubly nonlinear equation \eqref{1} are only known in smaller classes like $L^1(\R^n)$, see \cite{ZhaoYuan1995}, or $C^0(\R^n)\cap L^\infty(\R^n)$, see \cite{Kalashnikov1987}, which do not include the constructed solutions.

\subsection{New coordinates and perturbation equation}

One of the main challenges in the analysis of equation \eqref{1} is the moving free boundary $\partial \mathcal{P}$. With help of a von-Mises change of variables, we transfer the equation onto a fixed domain, namely the Euclidean half-space. Before doing so, let us consider the equation in traveling wave coordinates, that is we consider $v(\tau,y)$ given by $v(\tau, y) = \rho\left( \tau , y', y_n-V\tau\right)$ where $y=(y',y_n)$ with $y'\in \R^{n-1}$.
Then $v$ satisfies the equation
\begin{align}
    \partial_\tau v - \div \left(v^{m-1}\left|\nabla v \right|^{p-2}\nabla v\right) + V\partial_{y_n} v =0
\end{align}
and the traveling wave solution $\rho_{tw}$ becomes stationary, namely $v_{tw}( y) = \left(y_n\right)_{+}^{\frac{p-1}{m+p-3}}$.
Motivated by the form of the stationary solution $v_{tw}$, it seems more convenient to consider the pressure variable $g = v^{\frac{m+p-3}{p-1}}$, (still in traveling wave coordinates). After additionally rescaling the time $t\mapsto V\tau$, the pressure satisfies the equations
 \begin{align}
    \partial_t g - \frac{m+p-3}{p-1}g\div \left(\left|\nabla g \right|^{p-2}\nabla g\right) - \left|\nabla g\right|^p + \partial_{y_n} g=0,
 \end{align}
and the traveling wave solution becomes $g_{tw}( y) = \left(y_n\right)_{+}$.

For a function $g$ close to $g_{tw}$ in the homogeneous Lipschitz norm, we may assume for a given time $t_0$ and point $y_0$ that 
\begin{align}\label{bedingungimplicitefunction}
    \left|\partial_{y_n}g\left(t_0,y_0\right)\right| \neq 0.
\end{align}
By the implicit function theorem, there exists a function $\zeta = \zeta(t,x)$ such that
\begin{align}\label{graphequation}
    g\left(t, y', \zeta\left(t,x',x_n\right)
    \right) = x_n,
\end{align}
where $x'=y'$ and $x_n=g\left(t,y\right)$ or respectively $y_n = \zeta\left(t,x\right)$. For $\rho>0$ respectively $g> 0$, the new variable $x$ lies in  a fixed domain, namely the half space $ \mathbb{H}\coloneqq \left\{x = \left(x_1,\dots,x_n\right) \in \R^n : x_n>  0\right\}$. Note that the stationary solution $g_{tw} = \left(y_n\right)_+$ remains unchanged under this change of variables, i.e., it transforms into $\zeta_{tw}\left(x\right)=x_n$.
With help of the chain rule, we deduce from \eqref{graphequation} that
\begin{align}
    \partial_{y_n}g = \frac{1}{\partial_{x_n}\zeta} \quad \text{ and }\quad \partial_tg = -\frac{\partial_t\zeta}{\partial_{x_n}\zeta}
\end{align}
and similarly
\begin{align}
    \partial_{y_i}g = -\frac{\partial_{x_i}\zeta}{\partial_{x_n}\zeta} \quad \text{ for } i \in \left\{1,\dots,n-1\right\}.
\end{align}
Hence, we obtain for an arbitrary function $f(x)$ that
\begin{align}
    \partial_{y_i}f(x) = \partial_{x_i}f - \frac{\partial_{x_i}\zeta}{\partial_{x_n}\zeta}\partial_{x_n}f \quad \text{ and } \quad \partial_{y_n}f(x) = \frac{1}{\partial_{x_n}\zeta}\partial_{x_n}f.
\end{align}
After a further rescaling in time $t\mapsto \left(p+m-3\right)t$, the new variable $\zeta(t,x)$ satisfies
\begin{align}\mel\label{transformedpressureequation}
    \partial_t \zeta - \frac{1}{p-1}x_n \left(\termzeta\right)^{\frac{p-2}{2}} \left( \Delta'\zeta-2\frac{\nabla'\zeta\cdot \nabla'\partial_n\zeta}{\partial_n\zeta} + \termzeta \partial_n^2\zeta\right)\\&
    -\frac{1}{p-1} x_n \nabla'\zeta\cdot \nabla'\left(\left(\termzeta\right)^{\frac{p-2}{2}}\right) \\&
    +\frac{1}{p-1}x_n\termzeta \partial_n\left(\left(
        \termzeta
    \right)^{\frac{p-2}{2}}\right)\\&
    +\frac{1}{m+p-3}\left(\termzeta\right)^{\frac{p}{2}}\partial_n\zeta -\frac{1}{m+p-3} = 0 \quad \text{ in } \left[0,\infty\right) \times {\mathbb{H}}.
\end{align}
Here, we have used the notation $\grad' = (\partial_1,\dots, \partial_{n-1})^T$ for the tangential gradient and $\Delta' = \partial_1^2+\dots \partial_{n-1}^2$ for the tangential Laplacian. 

We will derive our main result Theorem \ref{3} for the pressure variable from the corresponding result for implicit variable $\zeta$, which reads as follows.
\begin{theorem}\label{Theoremv}
    There exists an $\varepsilon>0$ such that for any initial datum $
    \zeta_0 \in \dot{C}^{0,1}\left(\overline{\mathbb{H}}\right)$ satisfying
    \begin{align}
        \norm{\nabla \left(\zeta_0-\zeta_{tw} \right)}_{L^\infty\left(\mathbb{H}\right)}\leq \varepsilon
    \end{align}
    there exists a smooth solution $\zeta\in C^\infty\left((0,T)\times \overline{\mathbb{H}}\right)$ to the transformed pressure equation \eqref{transformedpressureequation} with initial datum $\zeta_0$ satisfying
    \begin{align}\label{smoothnessv}
        \sup \limits_{(t,x)\in (0,T)\times H} t^{k+|\beta|}\left| \partial_t^k\partial_x^\beta \nabla \left(\zeta(t,x) - \zeta_{tw}(x) \right)\right| \leq \tilde{c} \norm{\nabla \left(\zeta_0-\zeta_{tw}\right)}_{L^\infty(\mathbb{H})}
    \end{align}
    for any  $k \in \N_0$, $\beta\in \N_0^n$ and a constant $\tilde{c} = \tilde{c}(k,\beta)>0$.

    Moreover, the solution $\zeta$ is analytic in temporal and tangential directions. That is, there exist constants $\Lambda>0$ and $\hat{c}>0$ such that
    \begin{align}\MoveEqLeft[13]\label{analyticityv}
        \sup \limits_{(t,x)\in (0,T)\times H} t^{k+|\beta'|}\left| \partial_t^k\partial_{x'}^{\beta'} \nabla \left(\zeta(t,x) - \zeta_{tw}(x) \right) \right|\\&
         \leq \hat{c} \Lambda^{-k-|\beta'|}k!\beta'!\norm{\nabla \left(\zeta_0-\zeta_{tw}\right)}_{L^\infty(\mathbb{H})}\\&
    \end{align}
    for any $k \in \N_0$ and $\beta'\in \N_0^{n-1}$.
\end{theorem}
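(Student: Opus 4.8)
The plan is to adapt the perturbative scheme developed in \cite{Kienzler16} for the porous medium equation: write the solution as a small perturbation of the stationary profile $\zeta_{tw}(x)=x_n$, isolate the linearized operator, develop a sharp regularity theory for it, and then close a fixed-point argument in a scale of spaces that encodes exactly the estimates \eqref{smoothnessv} and \eqref{analyticityv}. Concretely, set $\zeta = x_n + u$ with $u(0,\cdot)=\zeta_0-\zeta_{tw}$. The smallness hypothesis keeps $\nabla\zeta$ in a fixed small neighborhood of $e_n$, hence bounded away from the origin, so every coefficient in \eqref{transformedpressureequation} --- in particular $\left(\termzeta\right)^{(p-2)/2}$, which for $p<3$ fails to be smooth only at $\nabla\zeta=0$ --- is a real-analytic function of $\nabla\zeta$ on the relevant region. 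Expanding \eqref{transformedpressureequation} to first order in $u$ yields
\[
\partial_t u - \mathcal{L}u = x_n\,\mathcal{N}_2\!\left(\nabla u,\nabla^2 u\right) + \mathcal{N}_1\!\left(\nabla u\right),\qquad \mathcal{L}u = x_n\Bigl(\tfrac{1}{p-1}\Delta' u + \partial_n^2 u\Bigr) + \tfrac{p-1}{m+p-3}\,\partial_n u,
\]
where $\mathcal{N}_1,\mathcal{N}_2$ vanish to second order at $\nabla u=0$ and $\mathcal{N}_1$ depends on first derivatives of $u$ only. The operator $\mathcal{L}$ has coefficients depending on $x_n$ alone, $-\mathcal{L}$ is elliptic for $x_n>0$ and degenerates linearly at $\partial\mathbb{H}$, and $\mathcal{L}$ is self-adjoint and nonpositive on $L^2\!\left(x_n^{\frac{p-1}{m+p-3}-1}\,dx\right)$ with Dirichlet form $\int_{\mathbb{H}}\bigl(\tfrac{1}{p-1}|\nabla' u|^2+|\partial_n u|^2\bigr)x_n^{\frac{p-1}{m+p-3}}\,dx$; for $p=2$ this is precisely the operator of \cite{Kienzler16}, while for $p\ne 2$ one picks up only the harmless anisotropy factor $\tfrac{1}{p-1}$.

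The heart of the argument is the linear theory for $\partial_t u=\mathcal{L}u$ on $(0,T)\times\mathbb{H}$ with $\nabla u_0\in L^\infty(\mathbb{H})$, carried out in the intrinsic parabolic geometry attached to the degenerate metric $x_n^{-1}|dx|^2+dt^2$, whose balls are the cylinders $\{|x-x_0|\lesssim r\sqrt{x_n},\ |t-t_0|\lesssim r^2 x_n\}$. One needs: (i) existence of a solution, via the variational/semigroup structure above; (ii) the homogeneous smoothing bound $\sup_{(t,x)} t^{k+|\beta|}\,\bigl|\partial_t^k\partial_x^\beta\nabla u(t,x)\bigr| \lesssim \|\nabla u_0\|_{L^\infty}$, obtained by iterating interior De Giorgi--Nash--Moser and Schauder estimates for $\mathcal{L}$ across the intrinsic cylinders; (iii) the inhomogeneous analogue with right-hand side $x_n f_2+f_1$ and matching time weights; and (iv) the upgrade of (ii) to factorial bounds $\sup_{(t,x)} t^{k+|\beta'|}\,\bigl|\partial_t^k\partial_{x'}^{\beta'}\nabla u(t,x)\bigr| \lesssim \hat c\,\Lambda^{-k-|\beta'|}k!\,\beta'!\,\|\nabla u_0\|_{L^\infty}$ in the temporal and tangential variables. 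Estimate (iv) rests on the observation that $\mathcal{L}$ commutes with $\partial_t$ and with each $\partial_{x'}$ (its coefficients are functions of $x_n$ only), so differentiating the equation in those directions produces no commutators, and a Cauchy-estimate recursion on shrinking intrinsic cylinders then yields the geometric/factorial growth. Normal derivatives must be excluded, since $[\partial_n,\mathcal{L}]$ reproduces the entire second-order part of $\mathcal{L}$, so each $\partial_n$ costs one derivative and analyticity genuinely fails in the $x_n$-direction --- which is exactly why \eqref{analyticityv} is stated only over multi-indices $\beta'$.

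With the linear theory in hand, fix $T>0$ and, for weights $C_{k,\beta}$ (arbitrary finite for \eqref{smoothnessv}; of factorial type $\Lambda^{-k-|\beta'|}k!\beta'!$ in the temporal and tangential indices for \eqref{analyticityv}), set
\[
X \;=\; \Bigl\{\, u : \ \|u\|_X := \sup_{k,\beta}\ \sup_{(t,x)\in(0,T)\times\mathbb{H}} C_{k,\beta}^{-1}\, t^{k+|\beta|}\,\bigl|\partial_t^k\partial_x^\beta\nabla u\bigr| \;<\;\infty \,\Bigr\}.
\]
Let $\Phi(w)$ be the solution of the linear problem above with initial datum $u_0=\zeta_0-\zeta_{tw}$ and right-hand side $x_n\mathcal{N}_2(\nabla w,\nabla^2 w)+\mathcal{N}_1(\nabla w)$. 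Using (ii)--(iv), the quadratic vanishing of $\mathcal{N}_1,\mathcal{N}_2$, and the Leibniz-type algebra inequality satisfied by the weights $C_{k,\beta}$ (so that products of $X$-bounded functions stay $X$-bounded), one shows that for $\varepsilon$ small enough $\Phi$ maps the ball $\{\|u\|_X\le K\|\nabla u_0\|_{L^\infty}\}$ into itself and is a contraction there. The fixed point $u$ gives $\zeta=x_n+u\in C^\infty((0,T)\times\overline{\mathbb{H}})$; the membership $u\in X$ is exactly \eqref{smoothnessv} (resp.\ \eqref{analyticityv}), and its $k=\beta=0$ component yields the closeness of $\nabla\zeta(t)$ to $e_n$ in $L^\infty$. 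Since uniqueness of solutions of \eqref{transformedpressureequation} in this class is not available, this construction is precisely what produces the asserted solution; merely Lipschitz initial data are admissible because the linear evolution already maps $\{\nabla u_0\in L^\infty\}$ into $X$ by (ii), so no separate approximation argument is required.

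The main obstacle lies in items (ii)--(iv) of the linear theory: transferring the regularity machinery of \cite{Kienzler16} --- intrinsic Schauder estimates, the $C^\infty$ smoothing, and the tangential/temporal Cauchy estimates --- to the anisotropic degenerate operator $\mathcal{L}$, i.e.\ verifying that nothing in that analysis is sensitive to replacing $x_n\Delta$ by $x_n(\tfrac{1}{p-1}\Delta'+\partial_n^2)$. A secondary but genuinely technical point is the bookkeeping in the fixed-point step: for $p\ne 2$ the nonlinearities $\mathcal{N}_1,\mathcal{N}_2$ are analytic but considerably more involved functions of $\nabla\zeta$ and $\nabla^2\zeta$ than in the porous-medium case, and one must carry the parabolic time weights together with the analytic (factorial) weights through all the resulting products so that the contraction closes for small $\varepsilon$.
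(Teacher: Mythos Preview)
Your overall architecture---write $\zeta=x_n+u$, isolate the degenerate linear operator, prove maximal-regularity/smoothing estimates for it, and close a contraction---matches the paper. But the execution differs from the paper's in three substantive ways, and one of them is a likely gap.

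\medskip
\textbf{(1) Rescaling and the linear theory.} The paper does not redo any linear estimates. After the tangential rescaling $z'=(p-1)^{1/2}x'$ the anisotropic operator you wrote becomes exactly Kienzler's $\L_\sigma=-z_n^{-\sigma}\nabla\cdot(z_n^{\sigma+1}\nabla\,\cdot\,)$ with $\sigma=\frac{2-m}{m+p-3}$, so the entire linear package (your items (i)--(iii)) is imported verbatim from \cite{Kienzler16} as Proposition~A. You propose instead to carry the $\tfrac{1}{p-1}$ anisotropy through a fresh De Giorgi--Nash--Moser/Schauder analysis; that is doable but unnecessary.

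\medskip
\textbf{(2) Function spaces.} This is where your plan is at risk. The paper does \emph{not} run the fixed point in pure $L^\infty$ spaces with weights $t^{k+|\beta|}$; it uses Kienzler's Carleson-measure norms $X(q),Y(q)$ built on intrinsic cylinders $Q_r(\hat z)$ and $L^q$ averages (the Koch--Tataru/Koch--Lamm philosophy). In particular $X(q)$ carries the mixed weight $\sqrt{t}\sqrt{z_n}$ on $\nabla^2 w$ and $L^q$ control of $z_n\nabla^3 w$, which is exactly what is needed to estimate a nonlinearity of the form $|\nabla w|^2+z_n|\nabla w||\nabla^2 w|$ and its gradient. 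Your norm $\sup_{k,\beta}C_{k,\beta}^{-1}t^{k+|\beta|}|\partial_t^k\partial_x^\beta\nabla u|$ controls $t|\nabla^2 u|$ but not $\sqrt{z_n}|\nabla^2 u|$, and near $\{x_n=0\}$ the intrinsic parabolic scaling is $|x-x_0|\sim r\sqrt{x_n}$, not $|x-x_0|\sim r$; the weight $t^{|\beta|}$ alone does not reflect this. Concretely, the inhomogeneous estimate you label (iii) is not known---and probably false---with $Y$ a pure weighted-$L^\infty$ space; the $L^q$ averaging over intrinsic cylinders is what makes the maximal-regularity estimate close. The paper obtains the final $t^{k+|\beta|}$--$L^\infty$ bounds \emph{a posteriori}, not as the fixed-point norm.

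\medskip
\textbf{(3) Analyticity mechanism.} You derive the factorial bounds by Cauchy estimates on shrinking cylinders, using that $\partial_t,\partial_{x'}$ commute with $\mathcal L$. The paper instead uses Angenent's trick as extended by Koch--Lamm: set $w_{\lambda,\kappa}(t,z)=w(\lambda t,z+\kappa\xi)$, observe it solves a problem of the same type, and invoke the analytic implicit function theorem to get analytic dependence on $(\lambda,\kappa)$ near $(1,0)$, which yields \eqref{analyticityv}. Both routes are legitimate; the paper's has the advantage that it falls out of the same fixed-point map already built for existence.

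\medskip
\textbf{Transversal smoothness.} You fold all $\partial_{x_n}$-derivatives into the fixed-point space. The paper separates this: existence, time/tangential analyticity, and then a short extra argument (a Morrey estimate together with the already-established bound on $t\partial_t w$) to obtain $t\partial_n\nabla w\in L^\infty$, iterated for higher normal derivatives. Your observation that $[\partial_n,\mathcal L]$ regenerates the full second-order part is exactly why the paper treats $\partial_{x_n}$ last and does not claim analyticity in that direction.
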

Similar to the previous theorem, it holds that $\nabla \zeta_{tw} = e_n$.
As a first consequence, this result justifies the use of the implicit function theorem to solve the equation \eqref{graphequation} for the implicit variable $y_n = \zeta$. Indeed, we obtain for every $(t,x)\in(0,T)\times \overline{\mathbb{H}}$ that $\left|\partial_n\zeta(t,x)-1\right|\leq \tilde{c}\varepsilon$ and thus, using $\partial_{y_n}g = (\partial_{x_n}\zeta)^{-1}$, it holds that $(1+\tilde{c}\varepsilon^{-1})\leq \partial_ng(t,y)\leq (1-\tilde{c}\varepsilon)^{-1}$. This ensures condition \eqref{bedingungimplicitefunction}.

Moreover, Theorem \ref{Theoremv} shows, that the change of variables $x\mapsto y$ is almost an isometry, what makes it a global reparametrization. For fixed $t$ it holds that
\begin{align}
    \left|(y-\hat{y})-(x-\hat{x})\right|& = \left|\left(\zeta(t,x)-\zeta(t,\hat{x})\right) - (x-\hat{x})\right| \\
    &\leq \sup \left|\nabla \zeta - e_n \right|\left| x-\hat{x} \right|\leq \tilde{c}\varepsilon\left|x-\hat{x}\right|,
\end{align}
since $\zeta_{tw}(x) = x_n$. This immediately yields
\begin{align}
    \left(1-\tilde{c}\varepsilon\right)|x-\hat{x}|\leq |y-\hat{y}| \leq \left(1+\tilde{c}\varepsilon\right)|x-\hat{x}|.
\end{align}
Hence, the change of variables $\overline{\mathbb{H}}\ni x \mapsto y\in {\mathcal{P}}\left(g\right)$ is an injective quasi-isometry and a solution $\zeta$ of \eqref{transformedpressureequation} provides a \emph{nonnegative}  solution $\rho$ of the doubly nonlinear diffusion equation \eqref{1} by reversing the performed transformation. Furthermore, the estimates $\eqref{smoothnessv}$ on the derivatives of $\zeta$ carry over to the corresponding estimates \eqref{smoothnessu} for the pressure $\rho^{\frac{m+p-3}{p-1}}$. 

Note, that the level set $\left\{(t,y) : g(t,y) = \lambda\right\}$ of $g$ at height $\lambda$ is given by
\begin{align}
    \left\{\left(t,x',y_n\right) : y_n = 
    \zeta\left(t,x',\lambda\right)\right\}.
\end{align}
Therefore, the analyticity of $\zeta$ in time and the tangential directions, cf.\ \eqref{analyticityv}, implies the analyticity of all level sets of $g$ respectively $\rho^{\frac{m+p-3}{p-1}}$.
Thus, our main result Theorem \ref{3} is a direct consequence of Theorem \ref{Theoremv}.

Let us now consider a small perturbation $w$ around the stationary solution $\zeta_{tw}=x_n$, that is assuming $\zeta=x_n+w$. After a rescaling of  the first $n-1$ coordinates, namely $x\mapsto z = \left(z',z_n\right)=\left((p-1)^{1/2}x',x_n\right)$, the evolution of the perturbation $w(t,z)$ satisfies
\begin{align}\mel\label{perturbationequation}
    \partial_tw +\L_{\sigma}w = \frac{1}{p-1}\left\{\left(\rest{p-2}-1\right)z_n\Delta'w + \left(\rest{p}-1\right) z_n\partial_n^2w
    \right.\\&\left.
    +\frac{p-1}{m+p-3}\left(1-\rest{p}\right)\partial_n w + \frac{p-1}{m+p-3}\left(1-\rest{p}-p\partial_nw\right)\right.\\&\left.
    -2z_n \rest{p-2}\frac{\nabla'w\cdot \nabla'\partial_nw}{1+\partial_nw} + z_n  \nabla'\rest{p-2}\cdot \nabla'w\right.\\&\left.
    -z_n \rest{2}\left(\partial_n \rest{p-2} + (p-2)\partial_n^2w\right)\right\} \eqqcolon \mathcal{N}[w] \quad \text{ in } \left[0,\infty\right) \times \overline{\mathbb{H}},
\end{align} 
where the linear operator $\L_\sigma$ is given by 
\begin{align}\L_\sigma w \coloneqq -z_n^{-\sigma}\nabla \cdot \left(z_n^{\sigma+1}\nabla w\right) \quad \text{with} \quad \sigma = \frac{2-m}{m+p-3},
\end{align}    
and we used the short notation
\begin{align}
    \rest{q}\coloneqq \left(\termw\right)^{q/2}.
\end{align}    
Note that $\sigma >-1$, provided that $p>1$ and $m+p>3$, that is in the slow diffusion regime \eqref{2}.

We investigate this perturbation equation because a sufficiently regular solution $w$ of \eqref{perturbationequation} yields to an accordingly regular solution $v = x_n+w$ of the transformed pressure equation \eqref{transformedpressureequation}. Thus, Theorem \ref{Theoremv} is a direct consequence of the corresponding result for the perturbation variable, see Theorem \ref{maintheorem} below.

The linear operator $\L_\sigma$ and the associated degenerate parabolic linear equation
\begin{align}\label{linearproblem}
    \partial_t w + \L_\sigma w=f \quad \text{ on } [0,\infty)\times {\mathbb{H}}
\end{align}
are well understood and already appeared in a similar treatment of the porous medium equation or the thin film equation, see \cite{John15,Kienzler16,Seis15,SeisTFE,ChoiMcCannSeis23}.

In order to formulate our main result for the perturbation variable $w$ we have to introduce some notation. 
For its proof, we will make use of a fixed point argument. An idea based Angenent's trick \cite{angenent_1990} and an extension by Koch and Lamm \cite{koch2012} enables us to derive the analyticity statements. To perform the fixed point argument, we have to work with cleverly chosen Banach spaces. In \cite{Kienzler16}, Kienzler introduced suitable Banach spaces $X$ and $Y$ (that we will define later), where a maximal regularity estimate of the form
\begin{align}
    \norm{w}_{\dot{C}^{0,1}} + \norm{w}_X \lesssim \norm{f}_Y + \norm{w_0}_{\dot{C}^{0,1}}
\end{align} for solutions of the linear equation \eqref{linearproblem} with initial datum $w_0$ holds true. The analysis in \cite{Kienzler16} is inspired by Koch's and Lamm's approach to treat semilinear parabolic equations \cite{koch2012}. Similar to Koch's and Tataru's idea to tackle the Navier-Stokes equations \cite{KochTataru2001} using a Carleson measure formulation of the BMO norm, Koch and Lamm used Gaussian decay estimates and Calderon--Zygmund theory to bound solutions in specific Carleson measures.
In contrast to the equations treated in \cite{KochLamm15}, \eqref{linearproblem} fails to be strictly parabolic at the boundary $\left\{z_n=0\right\}$. We overcome this problem by replacing the flat Euclidean metric on the half space by a Carnot--Carath\'eodory distance $d$ arising from the linear operator $\L_\sigma$. This distance has been proven useful in context of porous medium equation, see \cite{DaskalopoulosHamilton1998,KochHabilitation} for early contributions. It is equivalent to the distance
\begin{align}
\tilde{d}(z,z')\coloneqq\frac{|z-z'|}{\sqrt{z_n}+\sqrt{z'_n}+\sqrt{|z-z'|}}.
\end{align}
Reflecting the degeneracy of the equation, it is singular towards the boundary $\left\{z_n=0\right\}$. For further details we refer to \cite{Kienzler16} or \cite{Seis15}.

Incorporating these considerations, Kienzler introduced the (semi-)norms
\begin{align}
\|f\|_{Y(q)} = \sum\limits_{|\beta| \in \left\{0,1\right\}}\sup \limits_{\substack{0<r^2<T\\\hat{z}\in \mathbb{H}}} r^2\left(\frac{1}{r\left(r+\sqrt{\hat{z}_n}\right)}\right)^{1-|\beta|} \left|Q_r(\hat{z})\right|^{-\frac{1}{q}}\|\partial_z^{\beta}f\|_{L^q\left(Q_r(\hat{z})\right)} 
\end{align} 
and
\begin{align}\mel
    \|w\|_{X(q)} =\|\nabla w\|_{L^\infty\left((0,T)\times H\right)}+\|\sqrt{t}\sqrt{z_n}\nabla^2 w\|_{L^\infty\left((0,T)\times H\right)} \\&
    + \sum \limits_{\left(\ell,k,|\beta|\right) \in E} \sup \limits_{\substack{0<r^2<T\\ \hat{z}\in \mathbb{H}}} r^2\left(\frac{r}{r+\sqrt{\hat{z}_n}}\right)^{2\ell-|\beta|}\left|Q_r(\hat{z})\right|^{-\frac{1}{q}} \|z_n^{\ell}\partial_t^k\partial_z^\beta \nabla w\|_{L^q\left(Q_r(\hat{z})\right)},
\end{align}
where $E=\left\{(0,0,1),(0,1,0),(1,0,2)\right\}$ and $Q_r(\hat{z}) \coloneqq \left(\frac12 r^2,r^2\right)\times B^d_r(\hat{z})$. Note  that the balls $B_r^d(\hat{z})$ are defined with respect to the Carnot--Carath\'eodory distance $d$. The corresponding Banach spaces $X(q)$ and $Y(q)$ are defined in the canonical way. 

Let us now precisely phrase the result for the perturbation variable $w$ that parallels the results found for the porous medium equation in \cite{Kienzler16,Seis15} or the thin film equation in \cite{John15,SeisTFE}.
\begin{theorem}\label{maintheorem}
    Fix $q$ with $\max\left\{2(n+1),(1+\sigma)^{-1}\right\}<q<\infty$. Then, there exist $\varepsilon_0>0$ and $\tilde{C}>0$ such that for any $\varepsilon \leq \varepsilon_0$ and any initial datum $
    w_0 \in \dot{C}^{0,1}\left(\overline{\mathbb{H}}\right)$ satisfying
    \begin{align}
        \|\nabla w_0\|_{L^\infty\left(\mathbb{H}\right)}\leq \varepsilon
    \end{align}
    there exists a smooth unique solution $w$ to the perturbation equation \eqref{perturbationequation} in the class $\norm{w}_{X(q)}\leq \tilde{C}\varepsilon$. This solution satisfies
    \begin{align} \label{6}
        \sup \limits_{(t,z)\in (0,T)\times \mathbb{H}} t^{k+|\beta|}\left| \partial_t^k\partial_z^\beta \nabla w(t,z) \right| \leq \tilde{c} \norm{\nabla w_0}_{L^\infty\left(\mathbb{H}\right)},
    \end{align}
    for any  $k \in \N_0$, $\beta\in \N_0^n$ and a constant $\tilde{c} = \tilde{c}(k,\beta)>0$.
    
    Moreover, the solution depends analytically on its initial datum $w_0$ and is analytic in temporal and tangential directions. That is, there exist constants $\Lambda>0$ and $\hat{c} >0$ such that
    \begin{align}\mel\label{t3}
        \sup \limits_{(t,z)\in (0,T)\times \mathbb{H}} t^{k+|\beta'|}\left| \partial_t^k\partial_{z'}^{\beta'} \nabla w(t,z) \right| \leq \hat{c} \Lambda^{-k-|\beta'|}k!\beta'!\norm{\nabla w_0}_{L^\infty(\mathbb{H})}\\&
    \end{align}
    for any $k \in \N_0$ and $\beta'\in \N_0^{n-1}$.
\end{theorem}

Note that this Theorem \ref{maintheorem} coincides with Theorem 4.1 in \cite{Kienzler16}, which covers the same result for the special case of the porous medium equation.
The solution $w$ found in the above theorem directly generates a solution $\zeta = x_n + w$ of the transformed pressure equation \eqref{transformedpressureequation} verifying Theorem \ref{Theoremv}. 
Therefore, the proof of Theorem \ref{maintheorem}, see Chapter \ref{proofs}, will close the argument.
 
\section{Derivation of Theorem \ref{maintheorem}}\label{proofs}

To clarify the approach that will be used to derive Theorem \ref{maintheorem}, we consider the situation in a slightly more abstract way.
We are concerned with an initial value problem of the form
\begin{align}\label{nichtlinearesproblem}
    \begin{cases}
        \partial_t w + \L w &= \mathcal{N}[w] \quad \text{ in } (0,T)\times \mathbb{H}\\
        w(0,\cdot) &= g \qquad\quad \text{ in }\mathbb{H},
    \end{cases}
\end{align}
where   $\L$ is a linear spatial differential operator and the possibly nonlinear term $\mathcal{N}[w]$ depends analytically on $w$ near the trivial solution $w\equiv 0$.

We collect some assumptions that will be sufficient to deduce suitable results concerning existence, uniqueness and regularity of solutions of \eqref{nichtlinearesproblem} comparable to Theorem \ref{metatheorem}.
For this, let $(X_0,\|\cdot\|_{X_0})$, $(X_1,\|\cdot\|_{X_1})$ and $(Y,\|\cdot\|_Y)$ be three function spaces on $\mathbb{H}$ such that $X\coloneqq X_0\cap X_1$ (equipped with the norm $\|\cdot\|_X \coloneqq\|\cdot\|_{X_0}+\|\cdot\|_{X_1}$) and $Y$ are Banach spaces. The needed assumptions are the following.
\begin{enumerate}[label=(A\arabic*)]
    \item\label{assumption1} For any initial datum $g\in X_0$ and $f\in Y$, the linear initial value problem 
    \begin{align}\label{4}
        \begin{cases}
            \partial_t w + \L w &= f\\
            w(0,\cdot) &= g 
        \end{cases}
    \end{align}
    has a unique solution $w$ that satisfies
    \begin{align}
        \|w\|_X\lesssim \|f\|_Y+\|g\|_{X_0}.
    \end{align}
    \item \label{assumption2}The nonlinearity $\mathcal{N}$ is   near $w=0$ analytic from $X$ to $Y$. Moreover, for $w_1$, $w_2$ satisfying $\|w_1\|_X\leq R$ and $\|w_2\|_X\leq R$, with $0<R\ll1$ it holds that
    \begin{align}
        \|\mathcal{N}[w]\|_Y&\lesssim \|w\|_X^\alpha \quad \text{ for some }\alpha >1 \\\
        \text{ and } \quad \|\mathcal{N}[w_1]-\mathcal{N}[w_2]\|_Y&\lesssim R \|w_1-w_2\|_X.
    \end{align}
    \item \label{assumption3}The linear operator $\L$ satisfies $\|\L w\|_Y\lesssim  \|w\|_X$.
    \item \label{assumption4}The linear operator $\L$ is invariant under translation in tangential directions  and $\|  \nabla' w\|_Y\lesssim \|w\|_X$.
\end{enumerate}
The upcoming theorem presents which implications the assumptions \ref{assumption1} to \ref{assumption4} permit for the nonlinear problem \eqref{nichtlinearesproblem}.
\begin{theorem}\label{metatheorem}
    Provided that the two assumptions \ref{assumption1} and \ref{assumption2} hold true, there exists an $\varepsilon_0>0$ and $\tilde{C}>0$ such that for any $\varepsilon \leq \varepsilon_0$ and any initial datum $g$ satisfying
\begin{align}
    \|g\|_{X_0} \leq \varepsilon,
\end{align}
there exists a unique solution $w$ to the (nonlinear) initial value problem \eqref{nichtlinearesproblem} in the class $\|w\|_X\leq \tilde{C}\varepsilon$. Moreover, the solution depends analytically on its initial datum $g$.
If additionally assumption \ref{assumption3} is fulfilled, the solution $w$  is also analytic in time,  satisfying 
\begin{align}\label{t1}
     \|t^k \partial_t^k w\|_X\leq c_1 \Lambda_1^{-k}k! \|g\|_{X_0}
\end{align}
for any $k\in \N_0$ and constants $c_1>0$ and $\Lambda_1>0$ independent of $k$.

Similarly, assumption \ref{assumption4} yields analyticity in the translation-invariant directions, that is, 
\begin{align}\label{t2}
     \|t^{k+|\beta'|} \partial_{z'}^{\beta'} w\|_X\leq c_2 \Lambda_2^{-|\beta'|}\beta'! \|g\|_{X_0}
\end{align}
for any multiindex $\beta'\in \N_0^{n-1}$ and constants $c_2$ and $\Lambda_2>0$ independent of $k$.
\end{theorem}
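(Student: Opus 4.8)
The plan is to recast the nonlinear problem \eqref{nichtlinearesproblem} as a fixed-point equation and run the contraction mapping principle, and then to upgrade existence and uniqueness to the analyticity statements by combining the analytic implicit function theorem with a rescaling argument in the spirit of Angenent \cite{angenent_1990} and Koch--Lamm \cite{koch2012}. Let $\mathcal{S}\colon Y\times X_0\to X$ be the bounded linear solution operator furnished by assumption \ref{assumption1}, so that $w$ solves \eqref{nichtlinearesproblem} exactly when $w=\Phi(w):=\mathcal{S}\bigl(\mathcal{N}[w],g\bigr)$. On the ball $B_R=\{w\in X:\|w\|_X\le R\}$ with $R=\tilde C\varepsilon$, assumptions \ref{assumption1} and \ref{assumption2} give $\|\Phi(w)\|_X\lesssim\|\mathcal{N}[w]\|_Y+\|g\|_{X_0}\lesssim R^{\alpha}+\varepsilon$ and $\|\Phi(w_1)-\Phi(w_2)\|_X\lesssim\|\mathcal{N}[w_1]-\mathcal{N}[w_2]\|_Y\lesssim R\,\|w_1-w_2\|_X$. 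Since $\alpha>1$, choosing $\tilde C$ to be twice the implied constant in \ref{assumption1} and then $\varepsilon_0$ small makes $\Phi$ a self-map of $B_R$ and a contraction, so Banach's theorem yields the unique solution in $B_R$; absorbing $\|w\|_X^{\alpha}\le R^{\alpha-1}\|w\|_X$ into the left-hand side moreover gives the homogeneous bound $\|w\|_X\lesssim\|g\|_{X_0}$, which will be needed for the quantitative estimates below.

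For the analytic dependence on the datum I would apply the analytic implicit function theorem to $F(w,g):=w-\mathcal{S}(\mathcal{N}[w],g)$ near $(0,0)\in X\times X_0$. Here $F$ is analytic because $\mathcal{N}$ is analytic near $0$ and $\mathcal{S}$ is bounded linear, $F(0,0)=0$, and — since $\mathcal{N}[w]=O(\|w\|_X^{\alpha})$ with $\alpha>1$ forces the linear part $D\mathcal{N}[0]$ to vanish — one has $D_wF(0,0)=\mathrm{Id}_X$, which is invertible. The theorem then produces an analytic map $g\mapsto w(g)$ with $F(w(g),g)=0$ on a ball $\|g\|_{X_0}<\delta$; after shrinking $\varepsilon_0\le\delta$ and invoking the uniqueness already established, $w(g)$ is the solution, so it depends analytically on $g$.

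The two quantitative estimates \eqref{t1} and \eqref{t2} both come from embedding $w$ into an analytic one-parameter family of solutions of \emph{slightly perturbed} problems and reading off Cauchy estimates. For \eqref{t1}, assume \ref{assumption3} and for complex $\lambda$ near $1$ set $w_\lambda(t,z):=w(\lambda t,z)$; a direct computation gives $\partial_tw_\lambda+\lambda\mathcal{L}w_\lambda=\lambda\,\mathcal{N}[w_\lambda]$ with the \emph{unchanged} initial datum $g$. Writing $\lambda\mathcal{L}=\mathcal{L}+(\lambda-1)\mathcal{L}$ and moving $(\lambda-1)\mathcal{L}w$ to the right-hand side, assumption \ref{assumption3} lets me absorb this term for $|\lambda-1|$ small, so $\partial_t+\lambda\mathcal{L}$ still satisfies \ref{assumption1} with constants uniform in $\lambda$, while $\lambda\mathcal{N}$ inherits \ref{assumption2}; hence the fixed-point and implicit-function arguments above apply to the perturbed problem and yield, for each such $\lambda$, a unique small solution depending analytically on $\lambda$, which by uniqueness equals $w_\lambda$. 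Cauchy's estimates for the $X$-valued holomorphic map $\lambda\mapsto w_\lambda$ on a disk of radius $\Lambda_1$ about $1$, combined with the identity $\partial_\lambda^k w_\lambda|_{\lambda=1}=t^k\partial_t^k w$ and $\sup_\lambda\|w_\lambda\|_X\lesssim\|g\|_{X_0}$, give \eqref{t1}. For \eqref{t2} the same scheme runs with the shear $w_a(t,z):=w(t,z+a t\,\xi)$ for complex $a$ near $0$: invariance of $\mathcal{L}$ under translation in the direction $\xi$ (together with the corresponding invariance of $\mathcal{N}$ and of the norms in the concrete setting) and $w_a(0,\cdot)=g$ yield $\partial_tw_a+(\mathcal{L}-a\partial_\xi)w_a=\mathcal{N}[w_a]$, the perturbation $a\partial_\xi$ is absorbed via \ref{assumption4}, and Cauchy's estimates with $\partial_a^k w_a|_{a=0}=t^k\partial_\xi^k w$ deliver \eqref{t2}.

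Since the genuine PDE work is hidden inside \ref{assumption1}--\ref{assumption4}, the proof is essentially a formal scheme, and I expect the main obstacle to be the following absorption step: one must verify that the perturbed linear operators $\lambda\mathcal{L}$ and $\mathcal{L}-a\partial_\xi$ inherit the maximal-regularity estimate \ref{assumption1} with constants \emph{uniform} in the parameter — which is exactly what \ref{assumption3} and \ref{assumption4} are tailored to make possible. Two further points need care: the analytic implicit function theorem must be set up so that the parameter ($g$, $\lambda$, or $a$) enters $\Phi$ analytically while $D_wF=\mathrm{Id}$ at the base point, which rests on the superlinearity $\alpha>1$; and the time rescaling and the shear must preserve the $X$- and $Y$-norms up to constants, so that the perturbed solutions genuinely lie in a ball of the type $B_R$ and may be identified with $w_\lambda$, $w_a$ via uniqueness.
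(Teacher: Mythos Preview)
Your proposal is correct and follows essentially the same strategy as the paper: a contraction/fixed-point argument for existence and uniqueness, the analytic implicit function theorem for analytic dependence on the data, and Angenent's rescaling trick in the Koch--Lamm form for temporal and tangential analyticity. The only cosmetic difference is that the paper writes the parameter family as $w_{\lambda,\kappa}(t,z)=w(\lambda t,z+\kappa\xi)$ whereas you use the shear $w_a(t,z)=w(t,z+at\xi)$ to keep the initial datum fixed; both variants are standard and lead to the same Cauchy-type bounds.
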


The proof of this theorem relies on a combination of the analytic implicit function theorem and Angenent's trick, and has successfully used in various works, for instance, \cite{John15,Kienzler16,koch2012,Seis15,SeisTFE,SeisWinkler22}. Here, we provide a more abstract version.

\begin{proof}
\emph{Existence and uniqueness.}     For any $g\in X_0$ and $h\in X$, we denote the solution $ w$ to the linear problem \eqref{4}, whose existence is guaranteed by \ref{assumption1}, by $ w = I(g,h)$. It satisfies the a priori estimate
\[
\|w\|_X \lesssim \|\mathcal{N}(h)\|_Y +\|g\|_{X_0}.
\]
By \ref{assumption2}, if $\|h\|_X\le R$ for some $R\ll1$, the latter turns into 
\[
\|w\|_X\lesssim \|h\|_X^{\alpha} +\|g\|_{X_0}.
\]
Therefore, if $R\ll1$ is given and $\eps$ is sufficiently small, we deduce that $\|w\|_{X}\le R$. This shows for and $g\in B_{\eps}^{X_0}(0)$, the solution map $I(g,\cdot)$ is a continuous  mapping from $ \overline{B_R^X(0)}$ to $ \overline{B_R^X(0)}$.

The solution map is also contracting. To see this, we consider two solutions $w$ and $\tilde w$ associated to the linear problem with nonlinearities generated by $h$ and $\tilde h$ in $\overline{B_R^X(0)}$, that is, $w=I(g,h)$ and $\tilde w =I(g,\tilde h)$. Thanks to the a priori estimate from \ref{assumption1} and the Lipschitz property in \ref{assumption2}, we have that
\[
\|I(g,w) - I(g,\tilde w)\| = \|w-\tilde w\|_{X} \lesssim \|\mathcal{N}(h) - \mathcal{N}(\tilde h)\|_{Y} \lesssim R\|h-\tilde h\|_{X}.
\]
Thus, the solution map is a contraction if $R$ is sufficiently small. Fixing such an $R$, the contraction theorem thus yields the existence of a unique fixed point $w = I(g,w)$ in $\overline{B_R^X(0)}$, solving thus the non-linear problem \eqref{nichtlinearesproblem}.

\medskip

\noindent

\emph{Analytic dependece on initial data.} We will establish the analytic dependence on the initial data by an application of the analytic implicit function theorem, see, e.g.~Theorem 15.3 in \cite{Deimling85}. We start by noticing because the nonlinearity $\mathcal{N}(w)$ is an analytic function by the virtue of \ref{assumption2},   the contraction map $I$ is analytic on $B_{\eps}^{X_0} (0) \times B_R^X(0) $. We consider $J(g,w) = w - I(g,w)$, which is then also analytic. Moreover, because $J(0,0)=0$ and $D_wJ(0,0)=\mathrm{id_X}$, the implicit function theorem yields the existence of radii $\tilde R\le R$ and $\tilde \eps\le \eps$ and of an analytic map $A: B_{\tilde \eps}^{X_0}(0)\to B_{\tilde R}^X(0)$ such that $J(g,w)=0$ precisely if $A(g)=w$. By the uniqueness of the fixed point solution, $w=I(g,w)$, and the definition of $J$, the solution to the nonlinear problem \ref{nichtlinearesproblem}, considered as a function from $B_{\tilde \eps}^{X_0}(0)$ to $B_{\tilde R}^X(0)$, depends thus analytically on the initial datum $g$.

\medskip

\noindent

\emph{Analytic dependence on time and tangential variables.} Analyticity in time and tangential variables is obtained by what is commonly known as Angenent's trick \cite{angenent_1990}. We follows closely a version by Koch and Lamm \cite{koch2012}. For $\lambda\in \R$ and $y' \in \R^{n-1}$ given, we define 
\[
w_{\lambda,y'} = w\circ \chi_{\lambda,y'},\quad \chi_{\lambda,y'}(t,z) = (\lambda t,z'+ty',z_n).
\]
Then $w_{\lambda,y'}$ solves the nonlinear equation
\begin{equation}
\label{5}
\partial_t w_{\lambda,y'} +\L w_{\lambda,y'} = \mathcal{N}_{\lambda,y'}(w_{\lambda,y'}),
\end{equation}
where the new inhomogeneity is now
\[
\mathcal{N}_{\lambda,y'}(w_{\lambda,y'}) = \lambda\mathcal{N}(w_{\lambda,y'}) +(1-\lambda)\L w_{\lambda,y'} +y'\cdot \grad'w_{\lambda,y'}.
\]
We notice that $\mathcal{N}_{1,0} =\mathcal{N}$. 

Analogously to the above, we write $I_{\lambda,y'}(g,h)$ for the solution to the linear solution of  \eqref{4} with initial datum $g$ and inhomogeneity $\mathcal{N}(h)$. Notice that for the existence, we exploit \ref{assumption3} and \ref{assumption4}. We furthermore set $J_{\lambda,y'}(g,h) = h-I_{\lambda,y'}(g,h)$. It is then readily checked that $J_{1,0}(0,0)=0$ and $D_h J_{1,0}(0,0) = \mathrm{id}_X$. Therefore, thanks to the analytic implicit function theorem, there exist small constants $\delta>0$, $\hat R\le R$ and $\hat \eps\le \eps$, and an analytic function $A_{\lambda,y'}(g) = A(\lambda,y',g)$ from $B_{\delta}^{\R}(1)\times B_{\delta}^{\R^{n-1}}(0)\times B_{\hat \eps}^{X_0}(0)$ to $B_{\hat R}^X(0)$, such that $J_{\lambda,y'}(g,A_{\lambda,y'}(g))=0$. In particular, because $w=A(g)$ was the unique solution to the original problem \eqref{nichtlinearesproblem}, it holds that $A_{\lambda,y'}(g) = A(g)\circ\chi_{\lambda,y'} = w_{\lambda,y'}$. It follows that the solution $w_{\lambda,y'}$ to the transformed nonlinear problem \eqref{5} depends analytically near $(1,0)\in\R\times \R^{n-1}$ on the variables $(\lambda,y')$. In particular, there exists a constant $\Lambda$ such that
\[
\|\partial_{\lambda}^k\partial_{y'}^{\beta'} w_{\lambda,y'}|_{(\lambda,y') = (1,0)} \|_X \le \Lambda^{-k-|\beta'|} k!\beta'! \|g\|_{X_0},
\]
for any $k\in \N_0$ and $\beta'\in\N_0^{n-1}$. However, via the defintion of $w_{\lambda,y'}$, the latter turns into
\[
 \|t^{k+|\beta'|} \partial_t^k\partial_{z'}^{\beta'} w(t,z)\|_{X}\le \Lambda^{-k-|\beta'|}k!\beta'! \|g\|_{X_0},
\]
which gives the desired estimate with for the time and tangential derivatives. 
\end{proof}

Note that the estimates \eqref{t1} and \eqref{t2}   immediately provide the according estimate \eqref{t3} in Theorem \ref{maintheorem} if we choose the right spaces $X_0 = \dot{C}^{0,1}$, $X=X(q)$ and $Y=Y(q)$ introduced in the previous section.

With Theorem \ref{metatheorem} at hand, checking the assumptions \ref{assumption1} to \ref{assumption4} for the perturbation equation \eqref{perturbationequation} yields all statements from Theorem \ref{maintheorem} exept for  the smoothness of the solution in the transversal direction. We adress this later.  

Assumption \ref{assumption1} is covered by the following well-posedness result and maximal regularity estimate for the linear operator $\L_\sigma$ proved by Kienzler while  investigating the porous medium equation. (Kienzler proved the existence and uniqueness of suitably defined weak solutions of \eqref{linearproblem}. Since the exact definition of weak solutions is not important in our situation, we refer to \cite{Kienzler16} for further details.)

\begin{propositionA}[\cite{Kienzler16}]\label{TheoremLinearesProblem}
    Let $\max\left\{2(n+1),(1+\sigma)^{-1}\right\}<q<\infty$ and $f\in Y(q)$. Then there exists a unique weak solution $w$ of the linear perturbation equation \eqref{linearproblem} with initial datum $w_0\in \dot{C}^{0,1}\left(\overline{\mathbb{H}}\right)$ and inhomogeneity $f$. Moreover, the solution $w$ satisfies
    \begin{align}
        \norm{w}_{X(q)} \leq c \left(\norm{\nabla w_0}_{L^\infty(\mathbb{H})} + \norm{f}_{Y(q)}\right)
    \end{align}
    for a constant $c=c(\sigma,q)>0$.
\end{propositionA}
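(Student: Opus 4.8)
The plan is to reproduce, for the specific operator $\L_\sigma$, the semigroup-and-Duhamel strategy that underlies the porous-medium analysis in \cite{Kienzler16,Seis15}. First I would realize $\L_\sigma$ as a non-negative self-adjoint operator on the weighted Hilbert space $L^2(\mathbb{H},z_n^\sigma\,dz)$ via the symmetric form $\int_{\mathbb{H}}|\nabla w|^2 z_n^{\sigma+1}\,dz$; since $\sigma>-1$ the density $z_n^\sigma$ is locally integrable and the diffusion weight $z_n^{\sigma+1}$ degenerates only at $\{z_n=0\}$, so the form is closable and generates an analytic contraction semigroup $e^{-t\L_\sigma}$. The core of the argument is a sharp analysis of the associated heat kernel $G_\sigma(t,z,\hat z)$: the on-diagonal bound $G_\sigma(t,z,z)\lesssim |B^d_{\sqrt t}(z)|^{-1}$, Gaussian off-diagonal decay $G_\sigma(t,z,\hat z)\lesssim |B^d_{\sqrt t}(z)|^{-1}\exp(-d(z,\hat z)^2/(Ct))$, and — crucially — the analogous decay for all the derivative quantities $\partial_t^k\partial_z^\beta\nabla_z G_\sigma$ and their $z_n^\ell$-weighted companions that appear in the definition of $X(q)$. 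Here $d$ is the Carnot--Carath\'eodory distance attached to $\L_\sigma$, equivalent to $\tilde d$; a prerequisite is that $(\mathbb{H},d,z_n^\sigma\,dz)$ is a space of homogeneous type (the $d$-balls are doubling, which uses $\sigma>-1$) supporting a Poincar\'e inequality, after which the kernel bounds follow from the heat-kernel machinery on doubling metric–measure spaces (Nash/Davies method, parabolic Harnack, plus regularity bootstrapping for the derivative bounds) or, as in \cite{Kienzler16}, from a more explicit analysis exploiting the approximate product structure of $\partial_t+\L_\sigma$ near $\{z_n=0\}$.

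\textbf{From the kernel to the estimate.} With these ingredients I would write the solution by Duhamel's formula $w(t)=e^{-t\L_\sigma}w_0+\int_0^t e^{-(t-s)\L_\sigma}f(s)\,ds$. For the homogeneous part, since $e^{-t\L_\sigma}$ fixes constants one has $\int_{\mathbb{H}}\nabla_z G_\sigma(t,z,\hat z)\,\hat z_n^\sigma\,d\hat z=0$, so $\nabla e^{-t\L_\sigma}w_0(z)=\int_{\mathbb{H}}\nabla_z G_\sigma(t,z,\hat z)\bigl(w_0(\hat z)-w_0(z)\bigr)\hat z_n^\sigma\,d\hat z$; as $|w_0(\hat z)-w_0(z)|\le\|\nabla w_0\|_{L^\infty}|z-\hat z|$ and the Gaussian decay in $|z-\hat z|$ beats this linear growth, the kernel estimates deliver the $L^\infty$ bound on $\nabla w$ and, integrated over the cylinders $Q_r(\hat z)$, the full Carleson part of $\|w\|_{X(q)}$, the higher-order terms being treated identically with the differentiated kernel. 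For the inhomogeneity, each operator $f\mapsto z_n^\ell\partial_t^k\partial_z^\beta\nabla\int_0^t e^{-(t-s)\L_\sigma}f(s)\,ds$ is a singular integral on the parabolic space of homogeneous type $\bigl((0,T)\times\mathbb{H},\,dt\,z_n^\sigma\,dz\bigr)$, and $\|\cdot\|_{Y(q)}$, $\|\cdot\|_{X(q)}$ are exactly the Carleson-measure norms adapted to it; the estimate $\|w\|_{X(q)}\lesssim\|f\|_{Y(q)}$ then follows by a Calder\'on--Zygmund argument, splitting $f$ on each cylinder into a local piece (handled by the $L^q\to L^q$ mapping property of the singular integral, where the conditions $q>2(n+1)$ and $q>(1+\sigma)^{-1}$ enter to guarantee the integrability of the kernel against the weight $z_n^\sigma$ and the relevant Sobolev-type embeddings) and a tail piece (summed via Gaussian decay over dyadic $d$-annuli). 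Existence then comes from this representation together with a density argument in $w_0$ and $f$, and uniqueness from the energy identity in $L^2(z_n^\sigma\,dz)$, since the difference of two solutions with the same data solves the homogeneous equation with zero initial datum.

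\textbf{The main obstacle.} The hard part is the degeneracy of $\L_\sigma$ at $\{z_n=0\}$: the equation is not uniformly parabolic there, so classical Euclidean Calder\'on--Zygmund theory does not apply directly, and the intrinsic scaling is anisotropic — parabolic, $|z-\hat z|\sim\sqrt t$, in the interior but of square-root type, $z_n\sim t$, near the boundary, which is precisely why $X(q)$ and $Y(q)$ carry the factors $r/(r+\sqrt{\hat z_n})$ and the weights $z_n^\ell$. This forces one to work throughout with the distance $d$ and the measure $z_n^\sigma\,dz$, to verify doubling and a Poincar\'e inequality, and — the technically heaviest step — to establish the complete family of weighted kernel-derivative Gaussian bounds, including the mixed quantities $z_n^\ell\partial_t^k\partial_z^\beta\nabla G_\sigma$; everything downstream of that is a standard if lengthy harmonic-analysis argument. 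Since all of this is carried out for the operator $\L_\sigma$ in \cite{Kienzler16}, and the slow-diffusion conditions $p>1$, $m+p>3$ give precisely $\sigma>-1$ (the value of $\sigma$ otherwise entering only through the admissible range of $q$), in the paper I would simply invoke that reference.
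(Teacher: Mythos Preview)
Your proposal is correct and aligned with the paper: the paper does not prove this proposition at all but simply invokes \cite{Kienzler16}, noting that the linear operator $\L_\sigma$ with $\sigma>-1$ is exactly the one treated there; your sketch faithfully outlines the semigroup/Duhamel, kernel-Gaussian, and Calder\'on--Zygmund machinery of that reference and ends by recommending the same citation.
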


In contrast to the linear operator $\L_\sigma$, the nonlinearity $\mathcal{N}[w]$ in the perturbation equation \eqref{perturbationequation} appears to be far more complicated  compared to the porous medium equation case considered in \cite{Kienzler16}. However, a closer examination of the nonlinearity reveals enough (essentially quadratic) structure to establish assumption \ref{assumption2} similar to \cite[Lemma 2.3]{Kienzler16}.

\begin{lemma}\label{LemmaNichtlinearitaet}
    For $1\leq q \leq \infty$ and $0<R<1/2$ the mapping
    \begin{align}
        B_R^X(0) \ni w\mapsto \mathcal{N}[w]\in  Y(q)
    \end{align}
    is analytic and there exists a constant $c=c(n,q)>0$ such that
    \begin{align}
        \|\mathcal{N}[u]\|_{Y(q)} \leq c \norm{w}^2_{X(q)} \quad \text{ for all } w \in B_R^X  (0),
    \end{align}
    and
    \begin{align}
        \|\mathcal{N}[w_1]-\mathcal{N}[w_2]\|_{Y(q)} \leq c R \norm{w_1-w_2}_{X(q)} \quad \text{ for all } w_1,w_2\in B_R^X(0).
    \end{align}
\end{lemma}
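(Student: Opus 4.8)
The plan is to treat the nonlinearity $\mathcal{N}[w]$ as a sum of terms, each of which is a product of a smooth (analytic near $0$) function of $\nabla w$ with at most one factor of $z_n \nabla^2 w$, times a factor that vanishes at least linearly in $\nabla w$; one then estimates each such term in $Y(q)$ using the structure of the $X(q)$-norm. First I would recall the expansion $\rest{q} = \left(\termw\right)^{q/2} = \left(1 + F(\nabla w)\right)^{q/2}$ where $F(\nabla w) = \frac{|\nabla'w|^2 - 2\partial_n w - (\partial_n w)^2}{(1+\partial_n w)^2}$ is a rational function of $\nabla w$, analytic on a neighborhood of $\nabla w = 0$ (using $\|\nabla w\|_\infty \leq \|w\|_X \leq R < 1/2$ to keep the denominator bounded away from zero), with $F(0)=0$ and $\nabla F(0)$ finite. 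Hence $\rest{q} - 1 = q F(\nabla w)/2 + O(|F|^2)$, and each prefactor $\rest{p-2}-1$, $\rest{p}-1$, $1-\rest{p}-p\partial_n w$ (this last one is $O(|\nabla w|^2)$ because the linear part cancels by design), etc., is an analytic function of $\nabla w$ vanishing to first (or second) order at $0$. I would write $\mathcal{N}[w] = \sum_i G_i(\nabla w)\cdot H_i$ where $G_i$ is analytic with $G_i(0)=0$ and $H_i$ is one of: $z_n \Delta' w$, $z_n \partial_n^2 w$, $\partial_n w$, a constant (for the term $\frac{p-1}{m+p-3}(1-\rest{p}-p\partial_n w)$, where then $G_i$ vanishes quadratically), $z_n \frac{\nabla' w \cdot \nabla' \partial_n w}{1+\partial_n w}$, $z_n \nabla' \rest{p-2}\cdot \nabla' w$, or $z_n \rest{2}(\partial_n \rest{p-2} + (p-2)\partial_n^2 w)$.

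The core of the argument is then a product estimate: for $G$ analytic near $0$ with $G(0)=0$ and $H$ a second-order (in $\nabla w$, with the degenerate weight $z_n$) or first-order expression, one has $\|G(\nabla w)\, H\|_{Y(q)} \lesssim \|\nabla w\|_{L^\infty}\, \|\text{(second-order part)}\|_{\text{appropriate Carleson norm}} \lesssim \|w\|_X^2$. Concretely, I would use that $G(\nabla w) = \nabla w \cdot \tilde{G}(\nabla w)$ with $\tilde G$ bounded on $\overline{B}_R^X$, so $\|G(\nabla w)\|_{L^\infty} \lesssim \|\nabla w\|_{L^\infty} \leq \|w\|_X$, and that the remaining factor $H$ is controlled by the building blocks of $\|w\|_X$: the terms $z_n \nabla^2 w$ are handled by the weighted $L^\infty$ piece $\|\sqrt t \sqrt{z_n}\,\nabla^2 w\|_{L^\infty}$ together with the Carleson-type terms indexed by $E$ (in particular $(\ell,k,|\beta|)=(1,0,2)$ supplies the $z_n \partial_z^\beta \nabla w$ with $|\beta|=1$ in $L^q(Q_r)$), while $\partial_n w$ is controlled by $\|\nabla w\|_{L^\infty}$, and products of two first-order factors like $z_n \nabla' w\cdot \nabla'\partial_n w$ (or $z_n |\nabla w|\,|\nabla^2 w|$-type terms) by pairing $\|\nabla w\|_{L^\infty}$ with the $(1,0,2)$ Carleson term. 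The matching of scales — the powers of $r$ and of $r + \sqrt{\hat z_n}$ in the definitions of $Y(q)$ and $X(q)$, and the extra factor of $z_n$ — is exactly what makes each term land in $Y(q)$ with a quadratic bound; this bookkeeping is routine but must be done term by term, and it is essentially identical to the corresponding computation in \cite[Lemma 2.3]{Kienzler16}, now with more summands.

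For the Lipschitz (difference) estimate, I would use the standard telescoping: $\mathcal{N}[w_1] - \mathcal{N}[w_2] = \int_0^1 D\mathcal{N}[w_2 + s(w_1 - w_2)]\,(w_1 - w_2)\, ds$, and the fact that $D\mathcal{N}[w]$ is, for $\|w\|_X \leq R$, a bounded linear map $X(q) \to Y(q)$ with operator norm $\lesssim \|w\|_X \lesssim R$ — this follows from the same product structure, since differentiating any term $G_i(\nabla w)H_i$ either hits $G_i$ (producing $G_i'(\nabla w)\nabla(\delta w)\cdot H_i$, still with one vanishing-order factor $\nabla w$ hidden in $G_i'$ unless $H_i$ is already second order and $G_i$ vanishes to first order, in which case $G_i'(0)\neq0$ but $H_i$ carries the smallness via its Carleson norm times the remaining $\|\nabla w\|_\infty$ from... — here one must be slightly careful, but the term $z_n\nabla^2 w$ paired with $G_i'(0)\nabla(\delta w)$ still gives $\|w\|_X\|\delta w\|_X$ because $H_i\sim z_n\nabla^2 w$ is itself $O(\|w\|_X)$ in the Carleson norm) or hits $H_i$ (replacing $\nabla^2 w$ by $\nabla^2(\delta w)$, with the prefactor $G_i(\nabla w)$ still $O(\|\nabla w\|_\infty)$). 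Analyticity of $w \mapsto \mathcal{N}[w]$ then follows because $\mathcal{N}$ is built from the analytic functions $\nabla w \mapsto G_i(\nabla w)$ composed with the bounded linear map $w\mapsto \nabla w$ and multiplied by bounded linear (in $w$) expressions $H_i$, and products and compositions of analytic maps between Banach spaces are analytic; the convergence of the relevant power series is guaranteed on $\overline{B}_R^X$ by the radius-of-convergence bound coming from $R < 1/2$ keeping $1 + \partial_n w$ away from $0$.

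The main obstacle I expect is not any single estimate but the case analysis for the more intricate terms — especially $z_n \rest{2}\bigl(\partial_n \rest{p-2} + (p-2)\partial_n^2 w\bigr)$ and $z_n \nabla'\rest{p-2}\cdot\nabla' w$, which involve $\partial_n$ and $\nabla'$ falling on $\rest{p-2}$ and thus produce $z_n (\text{analytic})(\nabla w)\,\nabla^2 w$ with the analytic factor not vanishing at $0$ (it equals $\frac{p-2}{2}\nabla F(\nabla w)$-type expression, whose value at $0$ may be nonzero). In that situation the quadratic smallness must come entirely from pairing the $L^\infty$-bound on $\nabla w$ (hidden inside $\partial_n\rest{p-2} = \tfrac{p-2}{2}(1+F)^{p/2-2}\partial_n F$, and $\partial_n F$ is linear-plus-higher in $\nabla^2 w$ with coefficients vanishing at $\nabla w = 0$... actually $\partial_n F$ at $\nabla w=0$ equals $\partial_n\bigl(|\nabla'w|^2 - 2\partial_n w - (\partial_n w)^2\bigr)|_0 = -2\partial_n^2 w$, nonzero) — so really one groups $z_n \partial_n^2 w$ as the Carleson-controlled second-order factor and $\rest{2}$ or $(1+F)^{p/2-2}$ as a bounded multiplier, and the remaining needed factor of $\|w\|_X$ comes from... here one sees the term $z_n \rest{2}(p-2)\partial_n^2 w$ is genuinely only \emph{linear} in $w$ to leading order unless one notes $\rest{2} = 1 + O(|\nabla w|)$ so $z_n\rest{2}(p-2)\partial_n^2 w = (p-2)z_n\partial_n^2 w + O(\|\nabla w\|_\infty)z_n\partial_n^2 w$, and the leading $(p-2)z_n\partial_n^2 w$ must cancel against the like term coming from $\partial_n\rest{p-2}$'s leading part $\tfrac{p-2}{2}\cdot 1\cdot(-2\partial_n^2 w) = -(p-2)\partial_n^2 w$ — indeed $z_n\rest{2}\partial_n\rest{p-2}$ has leading part $z_n\cdot 1\cdot\tfrac{p-2}{2}\partial_n F$ and $\partial_n F$'s leading part is $-2\partial_n^2 w$ plus terms with a $\nabla w$ factor, giving $-(p-2)z_n\partial_n^2 w$, which exactly cancels $(p-2)z_n\partial_n^2 w$. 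Verifying these cancellations — they are forced by the fact that $\mathcal{N}[w]$ came from subtracting the linear operator $\L_\sigma$ off the full equation and so must vanish to second order at $w=0$ by construction — is the delicate bookkeeping step, and I would organize the proof around first establishing algebraically that $\mathcal{N}[w] = O_{\text{pointwise}}(|\nabla w|^2 + |\nabla w|\,|z_n\nabla^2 w|)$ with analytic coefficients, and only then invoking the $Y(q)$–$X(q)$ scaling to conclude.
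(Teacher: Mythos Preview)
Your approach is essentially the same as the paper's: establish the pointwise algebraic structure $|\mathcal{N}[w]|\lesssim |\nabla w|^2 + z_n|\nabla w|\,|\nabla^2 w|$ (including the cancellation of all linear terms, which you correctly identify and verify for the delicate $z_n\rest{2}\bigl(\partial_n\rest{p-2}+(p-2)\partial_n^2 w\bigr)$ term), then feed this into the $Y(q)$--$X(q)$ scaling term by term, and handle the Lipschitz and analyticity statements via the derivative $D\mathcal{N}$ and composition of analytic maps.

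There is, however, one genuine missing ingredient. The $Y(q)$-norm contains a $|\beta|=1$ piece, so you must also control $\nabla\mathcal{N}[w]$ in $L^q(Q_r(\hat z))$ with the scaling factor $r^2|Q_r|^{-1/q}$. Differentiating the building blocks $G_i(\nabla w)\cdot H_i$ produces, besides the harmless terms $|\nabla w|\,|\nabla^2 w|$ and $z_n|\nabla w|\,|\nabla^3 w|$, a term of type $z_n|\nabla^2 w|^2$ (coming from $\nabla$ hitting the factor $G_i(\nabla w)$ in a product with $H_i\sim z_n\nabla^2 w$). This term is \emph{not} directly controlled by any building block of the $X(q)$-norm: you have $\|\sqrt{t}\sqrt{z_n}\,\nabla^2 w\|_{L^\infty}$ and the Carleson pieces for $\nabla^2 w$ and $z_n\nabla^3 w$, but no direct handle on $\|z_n|\nabla^2 w|^2\|_{L^q(Q_r)}$. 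The paper (following \cite{Kienzler16,Kienzler13}) closes this gap by a weighted Gagliardo--Nirenberg interpolation,
\[
\|z_n|\nabla^2 w|^2\|_{L^q(Q_r(\hat z))}\ \lesssim\ \|\nabla w\|_{L^\infty}\,\|z_n\nabla^3 w\|_{L^q(Q_r(\hat z))},
\]
which brings the term back into the $X(q)$ framework. Your sketch treats the $Y(q)$ bookkeeping as routine, but this interpolation is a separate (if standard) lemma that needs to be invoked; without it the $|\beta|=1$ part of $\|\mathcal{N}[w]\|_{Y(q)}$ is not yet bounded.
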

\begin{proof}
    We start by shortly addressing the analyticity of the nonlinearity $\mathcal{N}$. Since $\mathcal{N}$ is a quotient of algebraic functions, it is analytic away from its poles, which is located at $1+\partial_n w=0$.
    The proof of the stated estimates proceeds in the same way as the one of the corresponding lemma for the porous medium equation, see \cite[Lemma 2.3]{Kienzler16}. We limit ourselves to show that the nonlinear terms in \eqref{perturbationequation} satisfy the following bounds, from where the proof can be easily reproduced. That is, for $\norm{\nabla w}_{L^\infty}\ll 1$ it holds that
    \begin{align}\label{bedingungN}
        \left|\mathcal{N}[w]\right|\lesssim \left|\nabla w\right|^2 + z_n \left|\nabla w\right|\left|\nabla^2w\right|
    \end{align}
    and 
    \begin{align}\label{bedingungNablaN}
        \left| \nabla \mathcal{N}[w]\right| \lesssim \left|\nabla w \right| \left| \nabla^2w\right| + z_n \left|\nabla w\right|\left|\nabla^3w\right| +z_n \left|\nabla^2w\right|^2,
    \end{align}
where  $\grad^2w$ and $\grad^3 w$ are the matrices of all second and third order derivatives, respectively.    All occurring terms except the last term on the right-hand side of \eqref{bedingungNablaN} can be easily controlled in the $X(q)$-norm.
    The remaining term can be interpolated via
    \begin{align}
        \|z_n |\nabla^2w|^2\|_{L^p(Q_r(\hat{z}))} \lesssim \|\nabla w\|_{L^\infty}\|z_n\nabla^3w\|_{L^p(Q_r(\hat{z}))},
    \end{align}
    with help of a weighted Gagliardo-Nirenberg interpolation inequality, see \cite{Kienzler13} for further details.
    To derive the bounds \eqref{bedingungN} and \eqref{bedingungNablaN}, we note that the following identities hold true:
        \begin{align}
             \rest{q} &= \frac{1}{\left(1+\partial_nw\right)^q} + \mathcal{O}\left(\left|\nabla'w\right|^2\right)
            =1 - q\partial_nw + \mathcal{O}\left(\left|\nabla w\right|^2\right) ,\label{EstimateRest}
            \\
            \partial_j\rest{q} &= -q\partial_j\partial_n w + \partial_j\partial_nw\mathcal{O}\left(\left|\nabla w\right|\right) + \partial_j\nabla'w\mathcal{O}\left(\left|\nabla w \right|\right) ,\label{EstimateErsteAbleitungRest}
        \end{align}
for any $ j=1,\dots,n$.    Using the above considerations, it immediately follows that the linear terms cancel out suitably and thus, the nonlinearity $\mathcal{N}[w]$ fulfills condition \eqref{bedingungN}.

    Next, we check the structure of the gradient of the nonlinearity. A slightly tedious but straightforward computation yields
    \begin{align}\MoveEqLeft\label{EstimateZweiteAbleitungRest}
        \partial_i\partial_j \rest{q} = \left(1+\mathcal{O}\left(\left|\nabla w\right|\right)
         \right) \\&
         \left( -q \partial_i\partial_j\partial_nw + q(q+1) \partial_i\partial_nw\partial_j\partial_nw + q \partial_i\nabla'w \partial_j\nabla'w + q \nabla'w\partial_i\partial_j\nabla'w\right) \\&+ \mathcal{O}\left(\left|\nabla w\right|\right) \left(\partial_j\nabla
        'w\partial_i\partial_nw + \partial_i\nabla'w\partial_j\partial_nw + \partial_i\nabla'w\partial_j\nabla'w\right).
    \end{align}
    With help of \eqref{EstimateRest}, \eqref{EstimateErsteAbleitungRest} and \eqref{EstimateZweiteAbleitungRest} we eventually deduce that the $\nabla \mathcal{N}[w]$ satisfies the desired condition \eqref{bedingungNablaN}.
    For the remainder of the proof we refer to \cite{Kienzler16}.
\end{proof}

The assumptions \ref{assumption3} and \ref{assumption4}  can be checked readily. Therefore, it only remains to prove smoothness in the transversal direction.

\begin{theorem}
Supose that $w$ is the solution to the perturbation equation \eqref{perturbationequation} that is analytic in temporal and tangential variables and that satisfies the decay estimates \eqref{t1} and \eqref{t2}. Then the solution is smooth in transversal direction and \eqref{6} holds. 
\end{theorem}

\begin{proof}Bounds on transversal derivatives can be obtained by using that the linear operator satisfies the commutation rule
\[
\partial_n \L_{\sigma} w = \L_{\sigma+1}\partial_n w - \Delta'w,
\]
and thus, it holds
\[
\partial_t (t\partial_n w) + \L_{\sigma+1} (t\partial_n w)  = t\partial_n \mathcal{N}(w) - t \Delta' w + \partial_n w.
\]
The nonlinearity can be estimated in a similar way as in Lemma  \ref{LemmaNichtlinearitaet}. Here, we will only establish the uniform control of the second order derivatives, which can be obtained without performing this tedious approach. Higher order transversal derivatives can be controlled   via a suitable iteration.

We use the following  Morrey-type inequality to carry the bounds in \eqref{t1} over to those in \eqref{6}:
For any $q>n$, it holds that
\begin{align*}
\|v\|_{L^{\infty}(Q_r^d(\hat z))} & \lesssim (r+\sqrt{\hat z_n})^{-2} \|z_n v\|_{L^q(Q_r^d(\hat z))} + r(r+\sqrt{\hat z_n})^{-1} \|z_n \grad v\|_{L^q(Q_r^d(\hat z))} \\
&\quad + r^2 (r+\sqrt{\hat z_n})^{-2} \|z_n \partial_t v\|_{L^q(Q_r^d(\hat z))}.
\end{align*}
This inequality can be derived similarly to the Euclidean case. For $v=\partial_n^2 w$, it implies 
\begin{align*}
r^2 \|\partial_n^2 w\|_{L^{\infty}(Q_r^d(\hat z))}& \lesssim r^2 (r+\sqrt{\hat z_n})^{-2} \|z_n \partial_n^2 w\|_{L^q(Q_r^d(\hat z))}\\
&\quad  + r^3(r+\sqrt{\hat z_n})^{-1} \|z_n \grad \partial_n^2 w\|_{L^q(Q_r^d(\hat z))} \\
&\quad + r^4 (r+\sqrt{\hat z_n})^{-2} \|z_n \partial_t \partial_n^2 w\|_{L^q(Q_r^d(\hat z))}.
\end{align*}
In view of the definition of the $X(q)$ norms and because $z_n \lesssim (r+\sqrt{\hat z_n})^2$ for any $z \in B_r^d(\hat z)$, this bound implies
\[
r^2 \|\partial_n^2 w\|_{L^{\infty}(Q_r^d(\hat z))} \lesssim \frac{r}{r+\sqrt{\hat z_n}} \left(\|w\|_{X(q)} + \|t\partial_t w\|_{X(q)}\right).
\]
For $t\in (r^2/2,r^2)$, by using the decay estimates in \eqref{t1}, we deduce that 
\[
\|t\partial_n^2 w\|_{L^{\infty}(Q_r^d(\hat z))} \lesssim \|g\|_{X_0}.
\]
Maximizing in $r$ and $\hat z$ gives the desired statement.
\end{proof}

\section*{Acknowledgment}
This work is funded by the Deutsche Forschungsgemeinschaft (DFG, German Research Foundation) under Germany's Excellence Strategy EXC 2044 --390685587, Mathematics M\"unster: Dynamics--Geometry--Structure.

\section*{Declaration of interests}
The authors do not work for, advise, own shares in, or receive funds from any organization that could benefit from this article, and have declared no affiliation other than their research organizations.

\bibliography{mybib}
\bibliographystyle{abbrv}
\end{document}